\documentclass[reqno,12pt]{amsart}
\headheight=8pt     \topmargin=0pt
\textheight=632pt   \textwidth=432pt
\oddsidemargin=18pt \evensidemargin=18pt




\theoremstyle{plain}
\newtheorem{thm}{Theorem}[section]
\newtheorem{lemma}[thm]{Lemma} 
\newtheorem{prop}[thm]{Proposition}


\theoremstyle{remark}

\theoremstyle{definition}
\newtheorem{defi}[thm]{Definition}

\usepackage{amscd,amssymb,comment,euscript}
\usepackage{pict2e}
\usepackage{enumitem}
\usepackage{multirow}
\usepackage[initials]{amsrefs}

\newcommand\CO{{\operatorname{CO}}}
\newcommand\Cpx{{\mathbb C}}
\newcommand\NC{\operatorname{NC}}
\newcommand\Pc{{\mathcal{P}}}
\newcommand\PC{{\operatorname{PC}}}
\newcommand\Phit{{\widetilde\Phi}}
\newcommand\phit{{\tilde\phi}}
\newcommand\pihat{{\hat\pi}}
\newcommand\pit{{\tilde\pi}}
\newcommand\Psit{{\widetilde\Psi}}
\newcommand\sigmahat{{\hat\sigma}}
\newcommand\sigmat{{\tilde\sigma}}
\newcommand\simpi{\overset{\pi}{\sim}}
\newcommand\simsigma{\overset{\sigma}{\sim}}
\newcommand\Thetat{{\widetilde\Theta}}
\newcommand\tr{{\mathrm{tr}}}

\begin{document}

\title[purely crossing]{Generating functions for purely crossing partitions}

\author[Dykema]{Kenneth J.\ Dykema}
\address{Department of Mathematics, Texas A\&M University,
College Station, TX 77843-3368, USA}
\email{kdykema@math.tamu.edu}
\thanks{Research supported in part by NSF grant DMS-1202660.}

\subjclass[2000]{05A18 (05A15)}
\keywords{purely crossing partitions; noncrossing partitions; generating functions; random Vandermonde matrices}

\date{February 14, 2016}

\begin{abstract}
The generating function for the number of  purely crossing partitions of $\{1,\ldots,n\}$
is found in terms of the generating function for Bell numbers.
Further results about generating functions for asymptotic moments of certain random Vandermonde matrices are derived.
\end{abstract}

\maketitle

\section{Introduction}

Let $\Pc(n)$ denote the lattice of all set partitions of $[n]:=\{1,\ldots,n\}$.
The elements of $\pi\in\Pc(n)$ are called blocks of the partition $\pi$;
we write $i\simpi j$ if and only if $i$ and $j$ belong to the same block of $\pi$ and we write $i\not\simpi j$
if and only if they belong to different blocks of $\pi$.

Recall that a partition $\pi\in\Pc(n)$ is said to be noncrossing whenever there do not exist $i,j,k,l\in[n]$
with $i<j<k<l$, $i\simpi k$ and $j\simpi l$, but $i\not\simpi j$.
We let $\NC(n)$ denote the set of all noncrossing partitions of $[n]$.

\begin{defi}\label{defi:split}
We say that a subset $S\subseteq[n]$ {\em splits} a partition $\pi\in\Pc(n)$ if $S$ is the union of some of the blocks of $\pi$.
In other words, $S$ splits $\pi$ if and only if $B\in\pi$ and $B\cap S\ne\emptyset$ implies $B\subseteq S$.
\end{defi}

\begin{defi}\label{defi:PC}
For $n\ge1$, a partition $\pi\in\Pc(n)$ is said to be {\em purely crossing} if
\begin{enumerate}[label=(\alph*),leftmargin=20pt]
\item\label{it:nosplit} no proper subinterval $\{p+1,p+2,\ldots,p+q\}$ of $[n]$ splits $\pi$
(by proper subinterval we mean with $0\le p<p+q\le n$ and $q<n$)
\item\label{it:noneigh} no block of $\pi$ contains neighbors, namely, $k\not\simpi k+1$ for all $k\in[n-1]$
\item\label{it:no1n} $1\not\simpi n$.
\end{enumerate}
We let $\PC(n)$ denote the set of all purely crossing partitions of $[n]$
and we let $\PC=\bigcup_{n=1}^\infty\PC(n)$.
\end{defi}
Note that condition~\ref{it:nosplit} implies that $\pi$ has no singleton blocks.
Moreover, if $\pi\in\PC(n)$, then every partition obtained from $\pi$ by cyclic permutation of $[n]$
is also purely crossing.

The purely crossing partitions were introduced in~\cite{BD} in connection with certain random Vandermonde matrices.
Is it easy to see that $\PC(n)$ is empty for $n\in\{1,2,3,5\}$ and that 
the only purely crossing partitions of sizes $n\in\{4,6,7\}$ are those found in Table~\ref{tab:467} and 
their orbits under cyclic permutations of $[n]$.
\begin{table}[ht]
\caption{Purely crossing partitions of sizes $4$, $6$ and $7$}
\label{tab:467}
\begin{center}
\begin{tabular}{c||c|l|c} 
$n$ & picture & blocks & \# in orbit \\ \hline\hline
$4$ & \begin{picture}(30,13)(0,0)
 \multiput(0,0)(20,0){2}{\line(0,1){5}}
 \multiput(10,0)(20,0){2}{\line(0,1){9}}
 \put(0,5){\line(1,0){20}}
 \put(10,9){\line(1,0){20}}
 \end{picture}
   & $\{1,3\},\{2,4\}$ & 1 \\ \hline\hline
\multirow{3}{*}{$6$}
 & \begin{picture}(50,13)(0,0)
 \multiput(0,0)(20,0){3}{\line(0,1){5}}
 \multiput(10,0)(20,0){3}{\line(0,1){9}}
 \put(0,5){\line(1,0){40}}
 \put(10,9){\line(1,0){40}}
 \end{picture}
   & $\{1,3,5\},\{2,4,6\}$ & 1 \\ \cline{2-4}
 & \begin{picture}(50,13)(0,0)
 \multiput(0,0)(20,0){2}{\line(0,1){5}}
 \multiput(30,0)(20,0){2}{\line(0,1){5}}
 \multiput(10,0)(30,0){2}{\line(0,1){9}}
 \multiput(0,5)(30,0){2}{\line(1,0){20}}
 \put(10,9){\line(1,0){30}}
 \end{picture}
   & $\{1,3\},\{2,5\},\{4,6\}$ & 3 \\ \cline{2-4}
 & \begin{picture}(50,14)(0,0)
 \multiput(0,0)(30,0){2}{\line(0,1){4}}
 \multiput(10,0)(30,0){2}{\line(0,1){7}}
 \multiput(20,0)(30,0){2}{\line(0,1){10}}
 \put(0,4){\line(1,0){30}}
 \put(10,7){\line(1,0){30}}
 \put(20,10){\line(1,0){30}}
 \end{picture}
   & $\{1,4\},\{2,5\},\{3,6\}$ & 1 \\ \hline\hline
\multirow{2}{*}{$7$}
 & \begin{picture}(60,13)(0,0)
 \multiput(0,0)(20,0){2}{\line(0,1){5}}
 \put(50,0){\line(0,1){5}}
 \multiput(10,0)(20,0){2}{\line(0,1){9}}
 \multiput(40,0)(20,0){2}{\line(0,1){9}}
 \put(0,5){\line(1,0){50}}
 \multiput(10,9)(30,0){2}{\line(1,0){20}}
 \end{picture}
   & $\{1,3,6\},\{2,4\},\{5,7\}$ & 7 \\ \cline{2-4}
 & \begin{picture}(60,14)(0,0)
 \multiput(0,0)(20,0){2}{\line(0,1){4}}
 \put(50,0){\line(0,1){4}}
 \multiput(10,0)(30,0){2}{\line(0,1){7}}
 \multiput(30,0)(30,0){2}{\line(0,1){10}}
 \put(0,4){\line(1,0){50}}
 \put(10,7){\line(1,0){30}}
 \put(30,10){\line(1,0){30}}
 \end{picture}
   & $\{1,3,6\},\{2,5\},\{4,7\}$ & 7 \\ \hline\hline
\end{tabular}
\end{center}
\end{table}

In this note, we study $\PC(n)$
and describe how  arbitrary partitions can be realized in terms of purely crossing ones and noncrossing partitions.
Thereby, we find an algebraic description of the generating function for $|\PC(n)|$ in terms of the generating function for
Bell numbers.

More generally, in Section~\ref{sec:partitions}
we study generating functions based on certain weighted sums over partitions.
An intermediate stage is to consider the connected partitions, which have been enumerated by F. Lenher~\cite{L02},
who proved that their cardinalities form the free cumulants of a Poisson distribution and found a generating series for them.
We will make an equivalent derivation of the generating series, thereby reproving Lehner's result, but with an eye toward
generalization that follows.
A motivation for and a possible application of this study in random matrices
are described in Section~\ref{sec:reasons}.
This includes more complicated results about algebra-valued generating functions that are similar
to those in Section~\ref{sec:partitions}.

\noindent
{\bf Notation.}
We will refer to the elements of $[n]$ as the atoms of the partition $\pi\in\Pc(n)$.
We let $1_n=\{[n]\}$  be the partition of $[n]$ into one block and
$0_n=\{\{1\},\{2\},\ldots,\{n\}\}$ be the partition of $[n]$ into $n$ blocks (all singletons).

If $\phi:A\to B$ is a bijective mapping and if $\pi$ is a set partition of $A$, then (by abuse of notation)
we let $\phi(\pi)$ denote the set partition of $B$ that results from applying $\phi$ to every block of $\pi$.

The usual order $\le$ on $\Pc(n)$ is $\sigma\le\pi$ if and only if every block of $\pi$ splits $\sigma$.

If $\pi$ is a partition of a set $X$, then the restriction of $\pi$ to a subset $Y\subseteq X$ is
$\{Y\cap B\mid B\in X\}\backslash\{\emptyset\}$.

\noindent
{\bf Acknowledgement.}
The author thanks Catherine Yan for helpful discussions about generating functions.

\section{Generating functions and NIS partitions}
\label{sec:partitions}

For each $n\ge1$ and $\pi\in\PC(n)$, let $a(\pi)\in\Cpx$.
Consider the formal power series
\[
A(x)=\sum_{n=1}^\infty a_nx^n=a_4x^4+a_6x^6+a_7x^7+\cdots,\quad\text{where}\quad a_n=\sum_{\pi\in\PC(n)}a(\pi).
\]
Note that, if $a(\pi)=1$ for all $\pi\in\PC(n)$, then
\[
A(x)=\sum_{n=1}^\infty |\PC(n)|x^n
\]
is the generating series for $|\PC(n)|$.
This will be the principal case of interest in this paper;
however, we prove results in the greater generality of
arbitrary coefficients $a(\pi)$.

\begin{defi}
Let $\PC^+(n)$ be the set of all $\pi\in\Pc(n)$ such that~\ref{it:nosplit} and~\ref{it:noneigh} of Definition~\ref{defi:PC} hold.
\end{defi}

The following lemma is straightforward and we omit a proof.
\begin{lemma}\label{lem:PC+}
$\PC^+(1)=\{0_1\}$ and, for $n\ge2$, we have the disjoint union
\[
\PC^+(n)=\PC(n)\cup\{\sigmat\mid\sigma\in\PC(n-1)\},
\]
where $\sigmat$ is obtained from $\sigma$ by adjoining $n$ to the block of $\sigma$ that contains $1$.
\end{lemma}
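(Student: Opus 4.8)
The plan is to dispose of the base case $n=1$ by inspection and then, for $n\ge 2$, to build the displayed union by isolating the single relation $1\simpi n$, which is the only one of the three defining conditions that separates $\PC(n)$ from $\PC^+(n)$: indeed $\PC^+(n)$ is cut out by~\ref{it:nosplit} and~\ref{it:noneigh} alone, so tautologically $\PC(n)=\{\pi\in\PC^+(n):1\not\simpi n\}$ and $\PC(n)\subseteq\PC^+(n)$, whence $\PC^+(n)$ is the disjoint union of $\PC(n)$ and $\{\pi\in\PC^+(n):1\simpi n\}$. For $n=1$ the only partition of $[1]$ is $0_1$, and conditions~\ref{it:nosplit} and~\ref{it:noneigh} hold vacuously (there are no proper subintervals and no adjacent pair), so $\PC^+(1)=\{0_1\}$. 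For $n\ge 2$ I would then realize $\sigma\mapsto\sigmat$ as a bijection from $\PC(n-1)$ onto $\{\pi\in\PC^+(n):1\simpi n\}$, its inverse being restriction to $[n-1]$, which immediately yields the claimed disjoint union.

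The disjointness and the routine verifications come first. Disjointness is automatic, since every member of $\PC(n)$ satisfies $1\not\simpi n$ by~\ref{it:no1n} while every $\sigmat$ satisfies $1\simpi n$ by construction. Condition~\ref{it:noneigh} is pure bookkeeping in both directions: for $\sigma\in\PC(n-1)$ and $\pi=\sigmat$, adjacencies inside $[n-1]$ are unchanged (restricting $\pi$ to $[n-1]$ returns $\sigma$), so~\ref{it:noneigh} for $\sigma$ gives it for every $k\le n-2$, while the sole new adjacency $n-1,n$ is blocked by~\ref{it:no1n} for $\sigma$, because $n-1\simpi n$ would force $n-1\simsigma 1$. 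Conversely, if $\pi\in\PC^+(n)$ with $1\simpi n$ and $\sigma$ is its restriction to $[n-1]$, then~\ref{it:noneigh} for $\pi$ passes to $\sigma$, and~\ref{it:no1n} for $\sigma$ holds since $1\simpi n$ combined with~\ref{it:noneigh} for $\pi$ forces $1\not\simpi n-1$.

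The main obstacle is condition~\ref{it:nosplit}, which must be transported across the addition or removal of the atom $n$ from the block of $1$. The cleanest device I know is to pass to complements in the ground set: if a set splits a partition then so does its complement, and exactly one of a set and its complement avoids the atom $1$. In the hard direction I start from $\pi\in\PC^+(n)$ with $1\simpi n$, let $\sigma$ be its restriction, and suppose a proper subinterval $I\subseteq[n-1]$ splits $\sigma$; I replace $I$ by whichever of $I$ and $[n-1]\setminus I$ misses $1$, calling it $K$ (if $1\in I$ then $I=\{1,\dots,q\}$ and $K=\{q+1,\dots,n-1\}$, still a genuine nonempty proper subinterval). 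Since $1$ and $n$ lie in a common block $B$ of $\pi$ and $K$ both misses $1$ and lies in $[n-1]$, the set $K$ avoids $B$ entirely, hence is a union of blocks of $\pi$ and a proper subinterval of $[n]$, contradicting~\ref{it:nosplit} for $\pi$. The reverse direction is easier: a proper subinterval of $[n]$ splitting $\sigmat$ cannot meet the block $B'\cup\{n\}$ without engulfing both $1$ and $n$ and thus equaling $[n]$, so it lies in $[n-1]$, avoids that block, and splits $\sigma$, contradicting~\ref{it:nosplit} for $\sigma$. With~\ref{it:nosplit} secured both ways, the correspondence is a bijection and the stated disjoint union follows.
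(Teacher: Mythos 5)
Your proof is correct and complete. The paper itself offers no proof to compare against (the lemma is declared ``straightforward'' and the proof omitted), and your argument---splitting $\PC^+(n)$ according to whether $1\simpi n$, realizing $\sigma\mapsto\sigmat$ as a bijection onto $\{\pi\in\PC^+(n): 1\simpi n\}$ with inverse given by restriction to $[n-1]$, and transporting condition~\ref{it:nosplit} via passage to the complementary interval avoiding the atom $1$---is precisely the natural verification the author presumably had in mind, with the one genuinely non-trivial point (the splitting condition in the restriction direction) handled correctly.
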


Let $b(0_1)=1$ and 
for $\pi\in\PC^+(n)$, $n\ge2$, let 
\[
b(\pi)=\begin{cases}
a(\pi)&\pi\in\PC(n), \\
a(\sigma)&\pi=\sigmat,\,\sigma\in\PC(n-1).
\end{cases}
\]
Consider the formal power series
\[
B(x)=\sum_{n=1}^\infty b_nx^n=x+b_4x^4+b_5x^5+\cdots,\quad\text{where}\quad b_n=\sum_{\pi\in\PC^+(n)}b(\pi).
\]
Thus, if $a(\pi)=1$ for all $\pi\in\PC(n)$ and all $n$, then
\[
B(x)=\sum_{n=1}^\infty|\PC^+(n)|x^n.
\]

\begin{lemma}\label{lem:AB}
We have $b_1=1$ and for every $n\ge2$,
$b_n=a_n+a_{n-1}$.
Thus, we have
\[
B(x)=x+(1+x)A(x).
\]
\end{lemma}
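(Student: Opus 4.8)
The plan is to prove Lemma~\ref{lem:AB} by a direct application of Lemma~\ref{lem:PC+}, which gives a disjoint-union decomposition of $\PC^+(n)$ that translates immediately into an additive relation among the coefficients $b_n$ and $a_n$. First I would handle the base case: by Lemma~\ref{lem:PC+}, $\PC^+(1)=\{0_1\}$, and since $b(0_1)=1$ by definition, we have $b_1=\sum_{\pi\in\PC^+(1)}b(\pi)=b(0_1)=1$.

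For the main assertion, fix $n\ge2$ and use the disjoint union $\PC^+(n)=\PC(n)\cup\{\sigmat\mid\sigma\in\PC(n-1)\}$ from Lemma~\ref{lem:PC+} to split the defining sum
\[
b_n=\sum_{\pi\in\PC^+(n)}b(\pi)
\]
into two pieces according to which part of the union $\pi$ lies in. On the first piece, where $\pi\in\PC(n)$, the definition of $b$ gives $b(\pi)=a(\pi)$, so this piece contributes $\sum_{\pi\in\PC(n)}a(\pi)=a_n$. On the second piece, where $\pi=\sigmat$ for $\sigma\in\PC(n-1)$, the definition of $b$ gives $b(\sigmat)=a(\sigma)$; here I would note that the correspondence $\sigma\mapsto\sigmat$ is a bijection from $\PC(n-1)$ onto $\{\sigmat\mid\sigma\in\PC(n-1)\}$ (adjoining $n$ to the block containing $1$ is clearly reversible by deleting $n$), so reindexing the sum gives $\sum_{\sigma\in\PC(n-1)}a(\sigma)=a_{n-1}$. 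Adding the two contributions yields $b_n=a_n+a_{n-1}$.

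Finally I would assemble the generating-function identity from the coefficient relations. Writing $B(x)=\sum_{n\ge1}b_nx^n$ and substituting $b_1=1$ together with $b_n=a_n+a_{n-1}$ for $n\ge2$, I split $B(x)=x+\sum_{n\ge2}(a_n+a_{n-1})x^n$. The $a_n$ terms sum to $\sum_{n\ge2}a_nx^n=A(x)$, since $a_1=0$ (recall $A(x)$ begins at $x^4$, so in particular $a_1=0$). The $a_{n-1}$ terms reindex as $\sum_{n\ge2}a_{n-1}x^n=x\sum_{m\ge1}a_mx^m=xA(x)$. Combining gives $B(x)=x+A(x)+xA(x)=x+(1+x)A(x)$, as claimed.

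I do not expect a genuine obstacle here: the content is entirely carried by Lemma~\ref{lem:PC+}, and the remaining work is bookkeeping. The only point requiring a moment's care is verifying that $a_1=0$ (and indeed $a_2=a_3=a_5=0$, though only $a_1=0$ is needed to absorb the first sum cleanly into $A(x)$), which follows from the emptiness of $\PC(n)$ for those small $n$ noted in the text; this ensures no stray low-order terms disrupt the reindexing when passing from the coefficient identities to the closed form for $B(x)$.
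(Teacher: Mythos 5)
Your proof is correct and follows essentially the same route as the paper: split the sum defining $b_n$ over the disjoint union in Lemma~\ref{lem:PC+} to get $b_n=a_n+a_{n-1}$, then translate the coefficient identities into $B(x)=x+(1+x)A(x)$. Your extra care in checking the bijectivity of $\sigma\mapsto\sigmat$ and the vanishing of $a_1$ only makes explicit what the paper leaves as ``follows immediately.''
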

\begin{proof}
By definition of $\PC^+(1)$, $b_1=1$.
For $n\ge2$, by Lemma~\ref{lem:PC+}, we have
\[
b_n=\sum_{\pi\in\PC(n)}a(\pi)+\sum_{\sigma\in\PC(n-1)}a(\sigma)=a_n+a_{n-1}.
\]
The desired equality follows immediately.
\end{proof}

\begin{defi}
For $n\ge1$, let $\CO(n)$ be the set of all connected partitions $\pi\in\Pc(n)$,
namely, such that~\ref{it:nosplit} of Definition~\ref{defi:PC} holds.
\end{defi}

\begin{defi}\label{def:pihat}
Given $\pi\in\Pc(n)$, let $\pihat$ denote the smallest noncrossing partition so that $\pi\le\pihat$,
in the usual order on $\Pc(n)$.
\end{defi}

\begin{lemma}\label{lem:NIS-NC}
Let $\pi\in\Pc(n)$.
Then $\pi\in\CO(n)$ if and only if $\pihat=1_n$.
\end{lemma}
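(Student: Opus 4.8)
The plan is to prove both implications by contraposition, exploiting the single observation that, since $\pi\le\pihat$, every block of $\pihat$ is a union of blocks of $\pi$; hence any subset of $[n]$ that splits $\pihat$ automatically splits $\pi$. This observation converts the ``hard'' direction into a purely structural statement about noncrossing partitions.

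For the implication $\pihat=1_n\Rightarrow\pi\in\CO(n)$, I would argue the contrapositive: suppose some proper subinterval $I=\{p+1,\dots,p+q\}$ splits $\pi$. Then the two-block partition $\sigma=\{I,[n]\setminus I\}$ has both of its blocks equal to unions of blocks of $\pi$, so $\pi\le\sigma$. The point to check is that $\sigma$ is noncrossing: in a two-block partition a crossing $i<j<k<l$ would have to place $i,k$ in one block and $j,l$ in the other, but if the two indices of a given pair lie in $I$ then the straddled index also lies in the interval $I$, a contradiction; the same holds with the roles reversed. Thus $\sigma$ is a noncrossing partition above $\pi$ with two blocks, and minimality of $\pihat$ gives $\pihat\le\sigma<1_n$, so $\pihat\ne1_n$.

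For the converse $\pi\in\CO(n)\Rightarrow\pihat=1_n$, again by contraposition, I assume $\pihat\ne1_n$. By the observation above it suffices to produce a proper subinterval that splits $\pihat$. The key step is a structural fact about noncrossing partitions: \emph{every noncrossing partition of $[n]$ other than $1_n$ has a block that is an interval $\{c,c+1,\dots,d\}$ with $d-c+1<n$}. Granting this, such an interval block is, by itself, a proper subinterval that splits $\pihat$, hence splits $\pi$, so $\pi\notin\CO(n)$; the block is proper because $\pihat\ne1_n$ has no block equal to all of $[n]$.

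I expect the main obstacle to be the structural fact just quoted, and I would prove it by choosing a block $B$ of minimal width $\max B-\min B$ and showing $B$ must be an interval. If $B$ had a gap, i.e.\ some $k\notin B$ with $\min B<k<\max B$, let $C$ be the block containing $k$. Were some $c\in C$ to lie outside $[\min B,\max B]$, say $c<\min B$, then $c<\min B<k<\max B$ with $c,k$ in $C$ and $\min B,\max B$ in $B$ would be a crossing; the case $c>\max B$ is symmetric. Hence $C$ is nested strictly inside $[\min B,\max B]$, so $C$ has strictly smaller width than $B$, contradicting the choice of $B$. The only delicate points are setting up these crossings with the indices in the correct order and confirming that the minimizing block is proper, both of which are routine once $\pihat\ne1_n$ is in force.
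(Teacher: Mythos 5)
Your proof is correct and follows essentially the same route as the paper's: the forward direction via the two-block noncrossing partition $\{I,[n]\setminus I\}$ lying above $\pi$, and the converse via the fact that every noncrossing partition other than $1_n$ has a block that is a proper subinterval, which then splits $\pi$. The only difference is that you supply detailed justifications (the noncrossing check for the two-block partition and the minimal-width argument for the interval-block fact) that the paper asserts without proof.
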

\begin{proof}
If $\pi\notin\CO(n)$, then there is a proper subinterval $I$ of $[n]$ that splits $\pi$.
The partition $\{I,[n]\backslash I\}$ of $[n]$ is noncrossing and lies above $\pi$ in the usual order on $\Pc(n)$.
Thus, $\pihat\le\{I,[n]\backslash I\}$ and $\pihat\ne1_n$.

Now suppose $\pihat\ne 1_n$.
Then at least one block of $\pihat$ is a proper subinterval of $[n]$, and this block necessarily splits $\pi$.
So $\pi\notin\CO(n)$.
\end{proof}

\begin{lemma}\label{lem:NISbijection}
Let $n\ge1$.
Then there is a bijection
\[
\Phi_n:\bigcup_{1\le\ell\le n}\left\{(\sigma,k_1,\ldots,k_\ell\middle|
\sigma\in\PC^+(\ell),\,k_1,\ldots,k_\ell\ge1,\,k_1+\cdots+k_\ell=n\right\}\to\CO(n),
\]
defined by $\pi=\Phi_n(\sigma,k_1,\ldots,k_\ell)$ is the partition obtained from $\sigma$ by replacing the $j$-th atom of $\sigma$
with an interval of length $k_j$.
More precisely, for the intervals
\[
I_j=\{k_1+\cdots+k_{j-1}+1,k_1+\cdots+k_{j-1}+2,\ldots,k_1+\cdots+k_{j-1}+k_j\}
\]
we have
\[
\pi=\{\cup_{j\in B}I_j\mid B\in\sigma\}.
\]
\end{lemma}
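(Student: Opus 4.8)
The plan is to exhibit the inverse of $\Phi_n$ explicitly through a canonical \emph{run decomposition} of a connected partition, and then to check that the two maps compose to the identity in both directions. The organizing idea is that condition~\ref{it:noneigh} (no block contains neighbors) converts the structure of maximal blocks of consecutive atoms of $\pi$ into the interval data $(k_1,\dots,k_\ell)$, while condition~\ref{it:nosplit} will match connectedness of $\pi$ against the requirement that the contracted partition $\sigma$ lie in $\PC^+(\ell)$.

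First I would describe the inverse map. Given $\pi\in\CO(n)$, say that $i$ and $i+1$ are \emph{linked} when $i\simpi i+1$, and decompose $[n]$ into the maximal intervals of consecutive atoms that are pairwise linked; listing them left to right as $I_1,\dots,I_\ell$, set $k_j=|I_j|$. Each $I_j$, being a run of linked atoms, lies entirely inside a single block of $\pi$ by transitivity, so one may define a partition $\sigma$ of $[\ell]$ by declaring $j\simsigma j'$ exactly when $I_j$ and $I_{j'}$ lie in a common block of $\pi$. By maximality of the runs, the boundary atoms between $I_j$ and $I_{j+1}$ lie in different blocks, forcing $j\not\simsigma j+1$, which is condition~\ref{it:noneigh} for $\sigma$. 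For condition~\ref{it:nosplit} I would argue contrapositively: if a proper subinterval $\{p+1,\dots,p+q\}$ of $[\ell]$ split $\sigma$, then the corresponding union $I_{p+1}\cup\cdots\cup I_{p+q}$ would be a proper subinterval of $[n]$ splitting $\pi$, contradicting $\pi\in\CO(n)$. Hence $\sigma\in\PC^+(\ell)$, and $\pi\mapsto(\sigma,k_1,\dots,k_\ell)$ lands in the correct domain.

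Next I would check that $\Phi_n$ itself is well defined, i.e. that $\pi=\Phi_n(\sigma,k_1,\dots,k_\ell)$ is connected. Suppose a proper subinterval $J\subseteq[n]$ splits $\pi$. Then $J$ is a union of blocks of $\pi$, and each block of $\pi$ equals $\cup_{j\in B}I_j$ for some $B\in\sigma$; thus $J$ is a union of some of the $I_j$. Since $J$ is itself an interval and the $I_j$ tile $[n]$ in order, the index set $\{j:I_j\subseteq J\}$ must be a subinterval of $[\ell]$, which is proper and nonempty because $J$ is. That subinterval then splits $\sigma$, contradicting condition~\ref{it:nosplit} for $\sigma$. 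Therefore $\Phi_n(\sigma,k_1,\dots,k_\ell)\in\CO(n)$.

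Finally, I would verify that the two compositions are the identity. Starting from $(\sigma,k_1,\dots,k_\ell)$, the partition $\Phi_n(\sigma,k_1,\dots,k_\ell)$ has blocks that are unions of the $I_j$; here condition~\ref{it:noneigh} for $\sigma$ is precisely what makes the atoms straddling the boundary of $I_j$ and $I_{j+1}$ unlinked, so that the maximal runs of $\Phi_n(\sigma,\dots)$ are exactly $I_1,\dots,I_\ell$. Hence the run decomposition recovers the lengths $k_j$ and, by contraction, the partition $\sigma$. Conversely, starting from $\pi\in\CO(n)$, inflating each atom $j$ of the associated $\sigma$ back to an interval of length $k_j$ reproduces precisely the blocks of $\pi$. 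This establishes that $\Phi_n$ is a bijection. I expect the main technical point to be the two-way correspondence between proper splitting subintervals of $[n]$ and of $[\ell]$, since that is exactly where connectedness of $\pi$ must be identified with condition~\ref{it:nosplit} for $\sigma$; once the run decomposition is in place, the remaining verifications are routine bookkeeping.
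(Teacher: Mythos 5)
Your proof is correct and takes essentially the same approach as the paper: the paper likewise constructs the inverse map by decomposing $\pi\in\CO(n)$ into the maximal intervals each contained in a single block of $\pi$ (these coincide with your runs of linked atoms), contracting to obtain $\sigma$, and transferring proper splitting subintervals back and forth between $[n]$ and $[\ell]$. If anything you are slightly more thorough, since the paper explicitly verifies only condition~\ref{it:noneigh} of Definition~\ref{defi:PC} for $\sigma$ and leaves condition~\ref{it:nosplit} and the identity of the two compositions as easy verifications, both of which you spell out.
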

\begin{proof}
Let $\pi=\Phi_n(\sigma,k_1,\ldots,k_\ell)$.
Suppose $K\subseteq[n]$ is a nonempty interval of $[n]$ that splits $\pi$.
Since each block of $\pi$ is a union of some of the intervals $(I_j)_{j\in[\ell]}$, we have $K=\bigcup_{j\in X}I_j$ for some
subset $X$ of $[\ell]$.
Since $K$ is a nonempty interval, $X$ must be a nonempty interval of $[\ell]$.
Since $K$ splits $\pi$, we see that $X$ splits $\sigma$.
Since $\sigma\in\PC^+(\ell)\subseteq\CO(\ell)$, we have $X=[\ell]$, so $K=[n]$.
This shows that $\Phi_n$ maps into $\CO(n)$.

We now describe the inverse map of $\Phi_n$.
Given $\pi\in\CO(n)$, consider the collection of interval subsets of $[n]$ that
are maximal with respect to the property of being contained in some block of $\pi$.
The collection of these is an interval partition of $[n]$, and we may arrange them
in increasing order $(I_j)_{1\le j\le\ell}$.
Let $\sigma$ be the partition of $[\ell]$ given by $j_1\simsigma j_2$ if and only if $I_{j_1}$ and $I_{j_2}$
belong to the same block as $\pi$.
We have $\sigma\in\PC^+(\ell)$ because if $j\simsigma j+1$, then $I_j\cup I_{j+1}$ would be the subset of a single block of $\pi$,
contradicting maximality of $I_j$.
Letting $k_j=|I_j|$, we have $\sum_{j=1}^\ell=n$.
Let $\Phit_n(\pi)=(\sigma,k_1,\ldots,k_\ell)$.
Then
\[
\Phit_n:\CO(n)\to\bigcup_{1\le\ell\le n}\left\{(\sigma,k_1,\ldots,k_\ell)\middle|
\sigma\in\PC^+(\ell),\,k_1,\ldots,k_\ell\ge1,\,k_1+\cdots+k_\ell=n\right\}
\]
and $\Phi_n\circ\Phit_n$ is the identity map on $\CO(n)$.
We easily verify that $\Phit_n\circ\Phi_n$ is the identity map on the domain of $\Phi_n$.
\end{proof}

For $\pi\in\CO(n)$, let $c(\pi)=b(\sigma)$, where $\pi=\Phi_n(\sigma,k_1,\ldots,k_\ell)$.
Let
\[
C(x)=\sum_{n=1}^\infty c_nx^n,\quad\text{where}\quad c_n=\sum_{\pi\in\CO(n)}c(\pi).
\]
Note that, if $a(\pi)=1$ for all $\pi\in\bigcup_{n=1}^\infty\PC(n)$, then
\[
C(x)=\sum_{n=1}^\infty|\CO(n)|x^n
\]
is the generating series for $|\CO(n)|$.

\begin{lemma}\label{lem:BC}
As formal power series, we have
\[
C(x)=B\left(\frac x{1-x}\right).
\]
\end{lemma}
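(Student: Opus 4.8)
The plan is to exploit the bijection $\Phi_n$ of Lemma~\ref{lem:NISbijection} together with the crucial fact that, by its very definition, the weight $c(\pi)$ depends only on the ``core'' partition $\sigma$ and carries no dependence on the block sizes $k_1,\ldots,k_\ell$. Since the whole statement is an identity of formal power series, I would argue coefficient-by-coefficient, observing that the coefficient of $x^n$ on each side is a finite sum.

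First I would rewrite $c_n$ by transporting the defining sum across the bijection. Because $\Phi_n$ is a bijection and $c(\Phi_n(\sigma,k_1,\ldots,k_\ell))=b(\sigma)$, we have
\[
c_n=\sum_{\pi\in\CO(n)}c(\pi)=\sum_{\ell=1}^n\;\sum_{\sigma\in\PC^+(\ell)}\;\sum_{\substack{k_1,\ldots,k_\ell\ge1\\k_1+\cdots+k_\ell=n}}b(\sigma).
\]
Next I would substitute this into $C(x)=\sum_{n\ge1}c_nx^n$ and reorganize the resulting triple sum. Summing over $n$ together with all compositions $k_1+\cdots+k_\ell=n$ amounts to summing over all tuples $(k_1,\ldots,k_\ell)$ of positive integers, with $x^n=x^{k_1}\cdots x^{k_\ell}$; since $b(\sigma)$ does not involve the $k_j$, the composition sum factors completely:
\[
C(x)=\sum_{\ell=1}^\infty\Bigl(\sum_{\sigma\in\PC^+(\ell)}b(\sigma)\Bigr)\Bigl(\sum_{k\ge1}x^k\Bigr)^{\ell}=\sum_{\ell=1}^\infty b_\ell\left(\frac{x}{1-x}\right)^{\ell}.
\]
Here I use $\sum_{k\ge1}x^k=x/(1-x)$ and the definition $b_\ell=\sum_{\sigma\in\PC^+(\ell)}b(\sigma)$. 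Recognizing the last expression as $B$ evaluated at $x/(1-x)$ finishes the argument.

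The only point demanding care is the legitimacy of the rearrangement and of substituting $x/(1-x)$ into the formal series $B$. I expect this to be the main (though mild) obstacle, and it is easily dispatched: since $(x/(1-x))^{\ell}$ has lowest-order term $x^{\ell}$, only the finitely many indices $\ell\le n$ contribute to the coefficient of $x^n$. Hence $B(x/(1-x))$ is a well-defined formal power series, the interchange of summations is valid term by term, and the coefficient of $x^n$ agrees with $c_n$ exactly.
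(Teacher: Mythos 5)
Your proof is correct and follows essentially the same route as the paper's: transport the sum defining $c_n$ across the bijection $\Phi_n$, use the fact that $c(\pi)=b(\sigma)$ is independent of $k_1,\ldots,k_\ell$ so the composition sum factors into $\bigl(\sum_{k\ge1}x^k\bigr)^\ell$, and recognize the result as $B(x/(1-x))$. Your closing remark on why the formal substitution and rearrangement are legitimate (each coefficient of $x^n$ involves only $\ell\le n$) is a point the paper leaves implicit, but it is the same argument.
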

\begin{proof}
From the bijection described in Lemma~\ref{lem:NISbijection} and the definition of $c(\pi)$, we have
\[
c_nx^n=\sum_{\pi\in\CO(n)}c(\pi)x^n=\sum_{\ell=1}^n\sum_{\sigma\in\PC^+(\ell)}b(\sigma)
\sum_{\substack{k_1,\ldots,k_\ell\ge1\\k_1+\cdots+k_\ell=n}}x^n
=\sum_{\ell=1}^nb_\ell\sum_{\substack{k_1,\ldots,k_\ell\ge1\\k_1+\cdots+k_\ell=n}}\prod_{j=1}^\ell x^{k_j},
\]
so 
\[
C(x)=\sum_{n=1}^\infty c_nx^n=
\sum_{\ell=1}^\infty b_\ell\bigg(\sum_{k=1}^\infty x^k\bigg)^\ell
=B\left(\frac x{1-x}\right).
\]
\end{proof}

\begin{lemma}\label{lem:Pbijection}
For each $n\ge1$, there is a bijection
\[
\Psi_n:\left\{(\tau,\sigma_1,\sigma_2,\ldots,\sigma_{|\tau|})\mid\tau=\{B_1,\ldots,B_{|\tau|}\}\in\NC(n),\,\sigma_j\in\CO(|B_j|)\right\}
\to\Pc(n),
\]
defined as follows.
The idea is to replace each block $B_j$ of $\tau$ with the corresponding partition of $B_j$ determined by $\sigma_j$.
For specificity, the blocks $B_1,\ldots,B_{|\tau|}$ above are written in order of increasing minimal elements.
Given $B\subseteq[n]$, let $\psi_B:[|B|]\to B$ be the unique order-preserving bijection.
Given $(\tau,\sigma_1,\ldots,\sigma_{|\tau|})$ in the domain of $\Psi_n$ and writing $\tau=\{B_1,\ldots,B_{|\tau|}\}$ as above,
we consider the partition $\psi_{B_j}(\sigma_j)$
of $B_j$ for each $j$.
Then
\[
\Psi(\tau,\sigma_1,\ldots,\sigma_{|\tau|})=\bigcup_{j=1}^{|\tau|}\psi_{B_j}(\sigma_j).
\]
\end{lemma}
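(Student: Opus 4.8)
The plan is to establish that $\Psi_n$ is a well-defined bijection by exhibiting an explicit inverse, in the spirit of Lemma~\ref{lem:NISbijection}. First I would verify that $\Psi_n$ lands in $\Pc(n)$: since $\tau=\{B_1,\ldots,B_{|\tau|}\}$ partitions $[n]$ and each $\psi_{B_j}(\sigma_j)$ is a partition of $B_j$, the union $\bigcup_j\psi_{B_j}(\sigma_j)$ is automatically a partition of $[n]$, the point being that the blocks $B_j$ are pairwise disjoint with union $[n]$. This step is routine and I would state it briefly.

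The substantive work is constructing the inverse map $\Psit_n:\Pc(n)\to\{(\tau,\sigma_1,\ldots,\sigma_{|\tau|})\}$. Given $\pi\in\Pc(n)$, I would set $\tau=\pihat$, the smallest noncrossing partition lying above $\pi$ (Definition~\ref{def:pihat}), and write $\pihat=\{B_1,\ldots,B_{|\tau|}\}$ with blocks ordered by increasing minimal element. For each $j$, I would let $\sigma_j=\psi_{B_j}^{-1}(\pi|_{B_j})$, the restriction of $\pi$ to $B_j$ pulled back through the order-preserving bijection $\psi_{B_j}$ to a partition of $[|B_j|]$. The key claim is that each $B_j$ is genuinely split by $\pi$ (so that $\pi|_{B_j}$ is a partition of the whole of $B_j$ and the restriction makes sense as claimed), which holds precisely because $\pihat\ge\pi$ means every block of $\pihat$ splits $\pi$.

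The main obstacle, and the heart of the proof, will be showing that $\sigma_j\in\CO(|B_j|)$, i.e.\ that each restricted-and-relabeled partition is connected. By Lemma~\ref{lem:NIS-NC}, this is equivalent to showing $\widehat{\sigma_j}=1_{|B_j|}$, or equivalently that no proper subinterval of $B_j$ (after relabeling, a proper subinterval of $[|B_j|]$) splits $\pi|_{B_j}$. Here I would argue by contradiction: if some proper subinterval $J$ of $B_j$ split $\pi|_{B_j}$, then one could produce a noncrossing partition strictly below $\pihat$ that still lies above $\pi$ --- by subdividing $B_j$ into $\psi_{B_j}(J)$ and its complement within $B_j$ --- contradicting the minimality of $\pihat=\tau$. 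The delicate point is checking that this refinement of $\pihat$ is still noncrossing and still dominates $\pi$; noncrossingness follows because $B_j$ is a block of a noncrossing partition and subdividing one block along a subinterval of its relabeled copy preserves the noncrossing property, while domination over $\pi$ follows from $J$ splitting $\pi|_{B_j}$.

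Finally I would verify that $\Psi_n$ and $\Psit_n$ are mutually inverse. That $\Psi_n\circ\Psit_n=\mathrm{id}$ follows because reassembling the restrictions $\pi|_{B_j}$ over the blocks of $\pihat$ recovers $\pi$ exactly. For the reverse composition $\Psit_n\circ\Psi_n=\mathrm{id}$, the crux is that when we build $\pi=\Psi_n(\tau,\sigma_1,\ldots,\sigma_{|\tau|})$ we recover $\tau$ as $\pihat$: one must check that $\pihat$ equals $\tau$ and not something coarser or finer. Coarser is impossible since $\tau\ge\pi$ forces $\pihat\le\tau$; finer (i.e.\ $\pihat<\tau$) is ruled out exactly by the connectedness $\sigma_j\in\CO(|B_j|)$, which guarantees no block $B_j$ can be further subdivided by a noncrossing partition still above $\pi$. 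I expect the connectedness step and this last identification of $\pihat$ with $\tau$ to carry essentially the same content, so once the obstacle above is handled, the mutual-inverse verification becomes routine.
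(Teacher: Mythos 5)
Your proposal is correct and takes essentially the same approach as the paper: the inverse map is built by taking $\tau=\pihat$ (the noncrossing cover of $\pi$) together with the relabeled restrictions of $\pi$ to its blocks, with connectedness of each $\sigma_j$ (via Lemma~\ref{lem:NIS-NC}) forced by minimality of $\pihat$. You merely spell out the subdivide-$B_j$-along-a-subinterval contradiction and the identification $\pihat=\tau$ that the paper compresses into ``since otherwise $B$ would not be a block of $\pihat$'' and ``it is not difficult to see.''
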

\begin{proof}
Given $\pi\in\Pc(n)$, let $\pihat$ be as in Definition~\ref{def:pihat} be the noncrossing cover of $\pi$.
For each block $B$ of $\pihat$, the restriction of $\pi$ to $B$ yields a partition that, after applying
the order-preserving bijection of $B$ onto $\{1,\ldots,|B|\}$, yields a partition $\sigma_B\in\Pc(|B|)$
with $\sigmahat_B=1_{|B|}$, since otherwise $B$ would not be a block of $\pihat$.
Thus, by Lemma~\ref{lem:NIS-NC}, $\sigma_B\in\CO(|B|)$.
Consequently, if we number the blocks $\pihat$ in conventional order, $\pihat=\{B_1,B_2,\ldots,B_{|\pihat|}\}$,
then we immediately see $\pi=\Psi_n(\pihat,\sigma_{B_1},\ldots,\sigma_{B_{|\pihat|}})$.
We define
\[
\Psit_n:\Pc(n)\to\left\{(\tau,\sigma_1,\ldots,\sigma_{|\tau|})\mid\tau=\{B_1,\ldots,B_{|\tau|}\}\in\NC(n),\,\sigma_j\in\CO(|B_j|)\right\}
\]
by $\Psit_n(\pi)=(\pihat,\sigma_{B_1},\ldots,\sigma_{B_{|\pihat}})$ as given above.
Thus, the composition $\Psi_n\circ\Psit_n$ is the identity on $\Pc(n)$.
Similarly, it is not difficult to 
see that $\Psit_n\circ\Psi_n$ is the identity on the domain of $\Psi_n$.
\end{proof}

For $\pi=\Psi_n(\tau,\sigma_1,\ldots,\sigma_{|\tau|})\in\Pc(n)$, let
\begin{equation}\label{eq:d}
d(\pi)=\prod_{j=1}^{|\tau|}c(\sigma_j).
\end{equation}
Let
\[
D(x)=1+\sum_{n=1}^\infty d_nx^n,\quad\text{where}\quad d_n=\sum_{\pi\in\Pc(n)}d(\pi).
\]
For convenience and consistency, we are using the convention $\Pc(0)=\{\emptyset\}$ and we set $d(\emptyset)=1$.
Note that if $a(\pi)=1$ for all $\pi\in\PC$, then $d(\pi)=1$ for all $\pi\in\bigcup_{n\ge1}\Pc(n)$.
In this case,
$d_n=|\Pc(n)|$ is the $n$-th Bell number and $D(x)$ is the generating function
for the Bell numbers.

\begin{lemma}\label{lem:PPbijection}
For each $n\ge1$, there is a bijection
\[
\Theta_n:\Pc(n)\to
\bigcup_{\substack{1\le\ell\le n\\k(1),\ldots,k(\ell)\ge0\\k(1)+\cdots+k(\ell)=n-\ell}}
\left\{(\sigma,\pi_1,\pi_2,\ldots,\pi_\ell)\mid\sigma\in\CO(\ell),\,\pi_j\in\Pc(k(j))\right\},
\]
defined as follows.
Given $\pi\in\Pc(n)$, let $\pihat$ be as in Definition~\ref{def:pihat} and let $B_1$ be the block of $\pihat$ containing $1$.
Let $\ell=|B_1|$ and write $B_1=\{s(1),s(2),\ldots,s(\ell)\}$ with $s(j-1)<s(j)$.
Let $\phi$ be the order-preserving bijection from $B_1$ onto $[\ell]$ and let $\sigma$ be the result of $\phi$ applied to the restriction
of $\pi$ to $B_1$.
For $1\le j\le\ell-1$, let $\pi_j$ be the result of applying the mapping $i\mapsto i-s(j)$ to the restriction of $\pi$ to the interval
$\{s(j)+1,\ldots,s(j+1)-1\}$.
Let $\pi_\ell$ be the result of applying the mapping $i\mapsto i-s(\ell)$ to the interval $\{s(\ell)+1,\ldots,n\}$.
Then $\Theta_n(\pi)=(\sigma,\pi_1,\ldots,\pi_\ell)$.
\end{lemma}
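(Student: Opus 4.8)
The plan is to exhibit an explicit inverse $\Thetat_n$ and to verify that $\Theta_n\circ\Thetat_n$ and $\Thetat_n\circ\Theta_n$ are both the identity. First I would check that $\Theta_n$ really maps into the stated target set. The size constraint is automatic: the marked positions $s(1)<\cdots<s(\ell)$ together with the $\ell$ gap intervals partition $[n]$, so $\ell+\sum_{j=1}^\ell k(j)=n$, i.e.\ $k(1)+\cdots+k(\ell)=n-\ell$ with each $k(j)\ge0$ (empty gaps being covered by the convention $\Pc(0)=\{\emptyset\}$). That $\sigma\in\CO(\ell)$ follows from Lemma~\ref{lem:NIS-NC}: since $B_1$ is a block of $\pihat$, the restriction of $\pi$ to $B_1$ cannot be split by any proper subinterval of $B_1$ (otherwise $B_1$ would break into two blocks of $\pihat$), so $\sigmahat=1_\ell$, exactly as in the proof of Lemma~\ref{lem:Pbijection}.

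The geometric heart of the argument is the observation that, because $\pihat$ is noncrossing and $B_1$ is the block of $\pihat$ containing the least atom $1=s(1)$, every block of $\pi$ lies entirely inside $B_1$ or entirely inside a single gap $G_j:=\{s(j)+1,\ldots,s(j+1)-1\}$ (with $G_\ell=\{s(\ell)+1,\ldots,n\}$). Indeed, if some block $C\ne B_1$ of $\pihat$ met two distinct gaps, say $a\in C\cap G_{j_1}$ and $b\in C\cap G_{j_2}$ with $j_1<j_2$, then $s(j_1)<a<s(j_1+1)<b$ with $s(j_1),s(j_1+1)$ in $B_1$ and $a,b$ in $C$, a crossing in $\pihat$ — contradiction. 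Hence $B_1$ and each $G_j$ is a union of blocks of $\pihat$, and therefore, since $\pi\le\pihat$, a union of blocks of $\pi$; thus $\pi$ is the disjoint superposition of its restrictions to $B_1,G_1,\ldots,G_\ell$ and no information is lost. The inverse $\Thetat_n$ reverses this: given $(\sigma,\pi_1,\ldots,\pi_\ell)$ in the target, set $s(1)=1$ and $s(j)=j+k(1)+\cdots+k(j-1)$ so the marked atoms and the length-$k(j)$ gaps interleave to fill $[n]$, place $\phi^{-1}(\sigma)$ on $B_1=\{s(1),\ldots,s(\ell)\}$, and place the shifted copy of $\pi_j$ on each $G_j$. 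The displayed decomposition then shows that $\Thetat_n\circ\Theta_n$ is the identity.

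The remaining and most delicate point is that $\Theta_n\circ\Thetat_n$ is the identity, for which I must show that the reconstructed $\pi$ has $B_1=\{s(1),\ldots,s(\ell)\}$ as the block of $\pihat$ containing $1$; I would prove the two inclusions separately. For the inclusion into $B_1$: each nonempty gap $G_j$ is an interval and, by construction, a union of blocks of $\pi$, so it splits $\pi$; thus $\{G_j,[n]\setminus G_j\}$ is a noncrossing partition above $\pi$, hence above $\pihat$, which forces the block of $\pihat$ containing $1$ (and $1\notin G_j$) to avoid $G_j$, and intersecting over all $j$ places that block inside $B_1$. For the reverse inclusion: the restriction $\pihat|_{B_1}$ is noncrossing and lies above $\pi|_{B_1}$, so its $\phi$-image is a noncrossing partition of $[\ell]$ above $\sigma$; since $\sigma\in\CO(\ell)$ has $\sigmahat=1_\ell$, this image must be $1_\ell$, i.e.\ all of $B_1$ lies in one block of $\pihat$. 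Together these give that the block of $\pihat$ containing $1$ is exactly $B_1$, so $\Theta_n$ recovers $\ell$, the $s(j)$, $\sigma=\phi(\pi|_{B_1})$, and each $\pi_j$. The main obstacle is precisely this last verification: one must control how the noncrossing cover interacts both with the interval structure of the gaps and with the restriction to $B_1$, invoking the connectedness of $\sigma$ to keep $B_1$ from fragmenting in $\pihat$ and the splitting property of the gaps to keep it from absorbing gap atoms.
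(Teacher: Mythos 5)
Your proof is correct and follows essentially the same route as the paper: you verify that $\Theta_n$ lands in the stated target set via Lemma~\ref{lem:NIS-NC}, construct the same explicit inverse $\Thetat_n$ (interleaving the marked atoms $s(j)=k(1)+\cdots+k(j-1)+j$ with shifted copies of the $\pi_j$), and check the two compositions. The only difference is one of detail: where the paper states that the two maps are ``not difficult to show'' to be inverses, you actually carry out that verification (the noncrossing-crossing argument confining blocks to $B_1$ or a single gap, and the two inclusions identifying the block of $\pihat$ containing $1$ after reconstruction), all of which is sound.
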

\begin{proof}
Take $k(j)=s(j+1)-s(j)-1$ if $1\le j\le\ell-1$ and let $k(l)=n-s(\ell)$.
We have $\sigmahat=1_{|B_1|}$, so by Lemma~\ref{lem:NIS-NC}, $\sigma\in\CO(|B_1|)$ and
$\Theta_n$ takes values in the indicated range.

We define
\[
\Thetat_n:\bigcup_{\substack{1\le\ell\le n\\k(1),\ldots,k(\ell)\ge0\\k(1)+\cdots+k(\ell)=n-\ell}}
\left\{(\sigma,\pi_1,\pi_2,\ldots,\pi_\ell)\mid\sigma\in\CO(\ell),\,\pi_j\in\Pc(k(j))\right\}
\to\Pc(n)
\]
as follows.
Given $(\sigma,\pi_1,\ldots,\pi_\ell)$ in the indicated domain of $\Thetat_n$, let
$s(j)=k(1)+k(2)+\cdots+k(j-1)+j$, with $s(1)=1$, and let
\[
L=\{s(1),s(2),\ldots,s(\ell)\}
\]
let $\phit:[\ell]\to L$ be the order-preserving bijection and consider the partition $\phit(\sigma)$ of $L$.
For each $j$ with let
$\pit_j$ be the partition of the interval $\{s(j)+1,\ldots,s(j)+k(j)\}$ obtained by applying the mapping $i\mapsto i+s(j)$ to $\pi_j$.
Let 
\[
\Theta_n(\sigma,\pi_1,\ldots,\pi_\ell)=\phit(\sigma)\cup\bigcup_{j=1}^\ell\pit_j.
\]
It it not difficult to show that $\Theta_n$ and $\Thetat_n$ are inverses of each other.
\end{proof}

\begin{lemma}\label{lem:dTheta}
Suppose $\pi\in\Pc(n)$ and $\Theta_n(\pi)=(\sigma,\pi_1,\ldots,\pi_\ell)$.
Then, with $d$ as defined in~\eqref{eq:d}, we have
\[
d(\pi)=c(\sigma)\prod_{j=1}^\ell d(\pi_j).
\]
\end{lemma}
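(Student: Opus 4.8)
The plan is to exploit that the weight $d$ is, by its definition in~\eqref{eq:d} together with the bijection $\Psi_n$ of Lemma~\ref{lem:Pbijection}, a product over the blocks of $\pihat$: namely $d(\pi)=\prod_{B\in\pihat}c(\sigma_B)$, where $\sigma_B\in\CO(|B|)$ is the order-preserving transport to $[|B|]$ of the restriction $\pi|_B$ (this is exactly the partition called $\sigma_B$ in the proof of Lemma~\ref{lem:Pbijection}, where it is shown that $\sigmahat_B=1_{|B|}$). The map $\Theta_n$ of Lemma~\ref{lem:PPbijection} singles out the block $B_1\ni1$ of $\pihat$ and records $\sigma=\sigma_{B_1}$ together with the restrictions $\pi_1,\ldots,\pi_\ell$ of $\pi$ to the gaps $G_j$ lying between consecutive atoms $s(j),s(j+1)$ of $B_1$ (and after $s(\ell)$). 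Accordingly I would split the product as $d(\pi)=c(\sigma_{B_1})\prod_{B\in\pihat,\,B\ne B_1}c(\sigma_B)=c(\sigma)\prod_{B\ne B_1}c(\sigma_B)$ and then show the remaining product reorganizes into $\prod_{j=1}^\ell d(\pi_j)$.

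First I would prove the structural fact that every block $B\ne B_1$ of $\pihat$ is contained in a single gap $G_j$. Since $B\cap B_1=\emptyset$ and $[n]\setminus B_1=\bigsqcup_{j=1}^\ell G_j$ (there is no gap below $s(1)=1$, because $1\in B_1$), if $B$ met two distinct gaps one could pick $x\in B\cap G_{j_1}$ and $y\in B\cap G_{j_2}$ with $j_1<j_2$; then $s(j_1)<x<s(j_1+1)\le s(j_2)<y$ exhibits $s(j_1),s(j_1+1)\in B_1$ and $x,y\in B$ interleaved as $s(j_1)<x<s(j_1+1)<y$, a crossing in the noncrossing partition $\pihat$ — a contradiction. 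Hence the blocks of $\pihat$ other than $B_1$ partition into the families $\rho_j:=\{B\in\pihat:B\subseteq G_j\}$, $1\le j\le\ell$.

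Second I would identify, for each $j$, the noncrossing cover of $\pi_j$ with the order-preserving transport of $\rho_j$. Because every block of $\pihat$ meeting $G_j$ is contained in $G_j$ by Step~1, the family $\rho_j$ is a genuine partition of $G_j$ and equals $\pihat|_{G_j}$; it is noncrossing as the restriction of the noncrossing $\pihat$, and it dominates $\pi|_{G_j}$, so $\widehat{\pi|_{G_j}}\le\rho_j$. For the reverse inequality, note that each block $B\in\rho_j$ satisfies $\widehat{\pi|_B}=1_{|B|}$ (it is a block of $\pihat$), so by Lemma~\ref{lem:NIS-NC} the partition $\pi|_B$ is connected; consequently, for any noncrossing $\mu\ge\pi|_{G_j}$ the restriction $\mu|_B$ is noncrossing and dominates $\pi|_B$, forcing $\mu|_B=1_{|B|}$, i.e. $B$ lies in one block of $\mu$. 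Taking $\mu=\widehat{\pi|_{G_j}}$ gives $\rho_j\le\widehat{\pi|_{G_j}}$, hence $\widehat{\pi|_{G_j}}=\pihat|_{G_j}=\rho_j$. Since transporting by the order-preserving bijection $G_j\to[k(j)]$ carries $\pi|_{G_j}$ to $\pi_j$ and each $\sigma_B$ with $B\in\rho_j$ to the corresponding connected piece of $\pi_j$, the defining product for $d(\pi_j)$ is exactly $\prod_{B\in\rho_j}c(\sigma_B)$. Substituting into the split product of Step~1 yields $d(\pi)=c(\sigma)\prod_{j=1}^\ell d(\pi_j)$, where empty gaps contribute the factor $d(\emptyset)=1$.

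I expect Step~2 — the commutation $\widehat{\pi|_{G_j}}=\pihat|_{G_j}$ of restriction with the noncrossing-cover operation — to be the main obstacle, since it is the only place where one must argue with an arbitrary noncrossing partition $\mu$ above the restriction rather than merely manipulate the fixed data produced by $\Theta_n$; everything else is bookkeeping once Step~1 has confined the blocks of $\pihat$ to single gaps.
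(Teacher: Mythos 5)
Your proof is correct and follows essentially the same route as the paper's: the paper likewise writes $d(\pi)=\prod_i c(\sigma_i)$ via $\Psi_n$, observes (via its Figure~\ref{fig:B1ncp}) that noncrossingness confines every block of $\pihat$ other than $B_1$ to a single gap, and then factors the product gap by gap. The only difference is that your Step~2 (the identity $\widehat{\pi|_{G_j}}=\pihat|_{G_j}$) is argued explicitly, whereas the paper obtains the value of $d(\pi_j)$ directly from the bijectivity of $\Psi_{k(j)}$ established in Lemma~\ref{lem:Pbijection} --- exhibiting any preimage of $\pi_j$ in the domain already determines $d(\pi_j)$ --- so the step you anticipated as the ``main obstacle'' can in fact be bypassed.
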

\begin{proof}
Let $\pi=\Psi_n(\tau,\sigma_1,\ldots,\sigma_{|\tau|})$.
Then $\sigma=\sigma_1$ and 
we draw the first block $B_1$ of $\tau$
in Figure~\ref{fig:B1ncp},
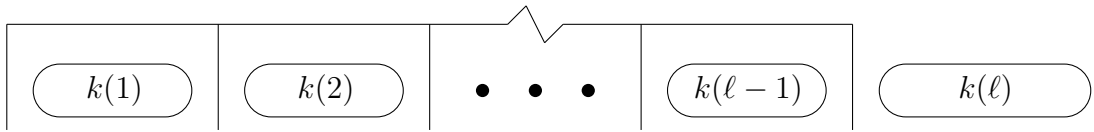
\begin{figure}[ht]
\caption{The first block of a noncrossing partition.}
\label{fig:B1ncp}
\begin{picture}(430,70)(0,0)
\put(10,50){\line(1,0){189.5}}
\put(220.5,50){\line(1,0){109.5}}
\put(199.5,50){\line(1,1){7}}
\put(206.5,57){\line(1,-2){7}}
\put(213.5,43){\line(1,1){7}}
\put(10,10){\line(0,1){40}}
\put(90,10){\line(0,1){40}}
\put(170,10){\line(0,1){40}}
\put(250,10){\line(0,1){40}}
\put(330,10){\line(0,1){40}}
\put(50,25){\oval(60,20)}
\put(40,22){$k(1)$}
\put(130,25){\oval(60,20)}
\put(120,22){$k(2)$}
\put(290,25){\oval(60,20)}
\put(270,22){$k(\ell-1)$}
\put(380,25){\oval(80,20)}
\put(370,22){$k(\ell)$}
\multiput(190,25)(20,0){3}{\circle*{5}}
\end{picture}
\end{figure}
with the gaps between elements of $B_1$ of length $k(1),\ldots,k(\ell-1)$ and with $k(\ell)$ elements of $[n]$ to the right of $B_1$.
The other blocks of $\tau$ all lie in the gaps depicted by the ovals, either between the adjacent elements of $B_1$ or to the right of $B_1$.
In particular, each $\pi_j$ is formed from those in the list $\sigma_2,\ldots,\sigma_{|\tau|}$ whose corresponding blocks of $\tau$
lie in the oval labeled $k(j)$.
Thus, we get
\[
\prod_{j=1}^\ell d(\pi_j)=\prod_{i=2}^{|\tau|}c(\sigma_j).
\]
By the definition~\eqref{eq:d} of $d(\pi)$, this proves the lemma.
\end{proof}

\begin{lemma}\label{lem:CD}
As formal power series, we have
\begin{equation}\label{eq:CD}
D(x)=1+C(xD(x)).
\end{equation}
Letting $F(x)=xD(x)$ and letting $F^{\langle-1\rangle}$ denote the inverse with respect to composition of $F$, we have
\begin{equation}\label{eq:Finv}
F^{\langle-1\rangle}(w)=\frac w{1+C(w).}
\end{equation}
\end{lemma}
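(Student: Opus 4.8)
The plan is to turn the bijection $\Theta_n$ of Lemma~\ref{lem:PPbijection}, together with the multiplicative formula of Lemma~\ref{lem:dTheta}, into a generating-function identity, and then to read off the compositional inverse by a one-line substitution.

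First I would sum $d(\pi)=c(\sigma)\prod_{j=1}^\ell d(\pi_j)$ over all $\pi\in\Pc(n)$, indexing the sum by the image $\Theta_n(\pi)=(\sigma,\pi_1,\ldots,\pi_\ell)$. Because $\Theta_n$ is a bijection onto tuples with $\sigma\in\CO(\ell)$ and $\pi_j\in\Pc(k(j))$ subject to $k(1)+\cdots+k(\ell)=n-\ell$ with each $k(j)\ge0$, and because the weight $d(\pi)$ factors, the inner summations over $\sigma$ and over each $\pi_j$ separate, giving
\[
d_n=\sum_{\ell=1}^n c_\ell\sum_{\substack{k(1),\ldots,k(\ell)\ge0\\k(1)+\cdots+k(\ell)=n-\ell}}\prod_{j=1}^\ell d_{k(j)},
\]
where $d_0=1$ by the convention $d(\emptyset)=1$.

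Next I would multiply by $x^n$ and sum over $n\ge1$. Writing $n=\ell+k(1)+\cdots+k(\ell)$ splits $x^n$ as $x^\ell\prod_j x^{k(j)}$, so the sum over compositions factors into $\ell$ independent copies of $\sum_{k\ge0}d_kx^k=D(x)$; here the constraint $k(j)\ge0$ (rather than $k(j)\ge1$) is exactly what supplies the constant term $d_0=1$ and hence the full series $D(x)$. This yields $\sum_{n\ge1}d_nx^n=\sum_{\ell\ge1}c_\ell(xD(x))^\ell=C(xD(x))$, and since $\sum_{n\ge1}d_nx^n=D(x)-1$ we obtain~\eqref{eq:CD}.

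For~\eqref{eq:Finv}, multiplying~\eqref{eq:CD} by $x$ gives $F(x)=x\bigl(1+C(F(x))\bigr)$, where $F(x)=xD(x)$. Since $D(x)$ has constant term $1$, the series $F$ has order $1$ with unit leading coefficient, so $F^{\langle-1\rangle}$ exists as a formal power series. Substituting $w=F(x)$, equivalently $x=F^{\langle-1\rangle}(w)$, turns the relation into $w=F^{\langle-1\rangle}(w)\bigl(1+C(w)\bigr)$, and dividing by $1+C(w)$ gives the asserted formula. The only delicate points are bookkeeping: confirming from Lemma~\ref{lem:PPbijection} that the gaps $k(j)$ may vanish (so empty gaps contribute $d_0=1$) and that the exponents recombine as $x^n=x^\ell\prod_jx^{k(j)}$, so that the gap-sums genuinely produce $D(x)^\ell$. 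I therefore expect no real obstacle beyond this careful tracking of the constant term.
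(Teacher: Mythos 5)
Your proposal is correct and follows essentially the same route as the paper: both convert the bijection $\Theta_n$ of Lemma~\ref{lem:PPbijection} and the factorization of Lemma~\ref{lem:dTheta} into the identity $D(x)=1+C(xD(x))$ by separating the sums over $\sigma$ and the $\pi_j$ into $c_\ell x^\ell D(x)^\ell$, and both obtain~\eqref{eq:Finv} by multiplying through by $x$ and substituting $w=F(x)$. Your extra remark that $F$ has order one with unit leading coefficient (so $F^{\langle-1\rangle}$ exists as a formal power series) is a small point of care the paper leaves implicit.
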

\begin{proof}
Using the bijection from Lemma~\ref{lem:PPbijection} and using Lemma~\ref{lem:dTheta}, we have
\begin{align*}
D(x)&=1+\sum_{\ell=1}^\infty\left(\sum_{\sigma\in\CO(\ell)}c(\sigma)x^\ell\right)\sum_{k(1),\ldots,k(\ell)\ge0}
\;\;\sum_{\substack{\pi_1\in\Pc(k(1)),\ldots\\ \hspace{1em}\pi_\ell\in\Pc(k(\ell))}}\;\;\prod_{q=1}^\ell d(\pi_q)x^{k(q)} \\
&=1+\sum_{\ell=1}^\infty c_\ell x^\ell\bigg(\sum_{k=0}^\infty d_k x^k\bigg)^\ell
=1+C(xD(x)).
\end{align*}
Multiplying both sides of~\eqref{eq:CD} by $x$, we get
$F(x)=x(1+C(F(x)))$, and letting $w=F(x)$ we get
$w=F^{\langle-1\rangle}(w)(1+C(w))$, from which~\eqref{eq:Finv} follows.
\end{proof}

Taking now $a(\pi)=1$ for all $\pi$, as we observed:
\renewcommand{\labelitemi}{$\bullet$}
\begin{itemize}
\item $D(x)$ is the generating function for the Bell numbers, $|\Pc(n)|$.
\item $C(x)$ is the generating function for $|\CO(n)|$, and can be found from $D$ using Lemma~\ref{lem:CD}.
\item $B(x)$ is the generating function for $|\PC^+(n)|$, and can be found from $C$ using Lemma~\ref{lem:BC}.
\item $A(x)$ is the generating function for $|\PC(n)|$, and can be found from $B$ using Lemma~\ref{lem:AB}.
\end{itemize}
The Bell numbers $|\Pc(n)|$ are well known.
Using Mathematica~\cite{Wo15},
we calculated the first several terms of each generating series and we obtained the values displayed in Table~\ref{tab:ptnsizes}.
\begin{table}[ht]
\caption{Cardinalities of sets of partitions.}
\label{tab:ptnsizes}
\begin{center}
\begin{tabular}{r||r|r|r|r|} 
$n$ & $|\PC(n)|$\hspace{0.5em} & $|\PC^+(n)|$\hspace{0.14em}  & $|\CO(n)|$\hspace{0.62em}  & $|\Pc(n)|$\hspace{1.4em}
  \\ \hline\hline
$1$ & $0$ & $1$ & $1$ & $1$ \\ \hline 
$2$ & $0$ & $0$ & $1$ & $2$ \\ \hline 
$3$ & $0$ & $0$ & $1$ & $5$ \\ \hline 
$4$ & $1$ & $1$ & $2$ & $15$ \\ \hline 
$5$ & $0$ & $1$ & $6$ & $52$ \\ \hline 
$6$ & $5$ & $5$ & $21$ & $203$ \\ \hline 
$7$ & $14$ & $19$ & $85$ & $877$ \\ \hline 
$8$ & $62$ & $76$ & $385$ & $4\,140$ \\ \hline 
$9$ & $298$ & $360$ & $1\,907$ & $21\,147$ \\ \hline 
$10$ & $1\,494$ & $1\,792$ & $10\,205$ & $115\,975$ \\ \hline 
$11$ & $8\,140$ & $9\,634$ & $58\,455$ & $678\,570$ \\ \hline 
$12$ & $47\,146$ & $55\,286$ & $355\,884$ & $4\,213\,597$ \\ \hline 
$13$ & $289\,250$ & $336\,396$ & $2\,290\,536$ & $27\,644\,437$ \\ \hline 
$14$ & $1\,873\,304$ & $2\,162\,554$ & $15\,518\,391$ & $190\,899\,322$ \\ \hline 
$15$ & $12\,756\,416$ & $14\,629\,720$ & $110\,283\,179$ & $1\,382\,958\,545$ \\ \hline 
\end{tabular}
\end{center}
\end{table}

\section{Connection with random Vandermonde matrices}
\label{sec:reasons}

Purely crossing partitions arose in~\cite{BD}, appearing in the study of asymptotic moments
of certain random Vandermonde matrices $X_N$.
In particular, by Theorem 3.28 of~\cite{BD}, for the $n$-th asymptotic $*$-moment, we have
\begin{equation}\label{eq:asymptmoms}
m_n:=\lim_{N\to\infty}\mathbb{E}\circ\tr\left((X_NX_N^*)^n\right)=\sum_{\pi\in\Pc(n)}w_\pi
\end{equation}
with weight
\[
w_\pi=\tau\big(\Lambda_\pi(\underset{n-1\text{ times}}{\underbrace{1,1,\ldots,1}})\big),
\]
where $\Lambda_\pi$ is the multilinear function from $n-1$ copies of $C[0,1]$ into $C[0,1]$ described in Section~2 of~\cite{BD}
and where $\tau$ is the trace on $C[0,1]$ obtained by integrating with respect to Lebesghe measure.
These $w_\pi$ are precisely the volumes of certain polytopes first described by Ryan and Debbah in~\cite{RD09}, who had also
obtained the formula~\eqref{eq:asymptmoms}.
In Section~4 of~\cite{BD}, we show how, for arbitrary $\pi\in\Pc(n)$, $\Lambda_\pi$ and, thus $w_\pi$, can be computed
via a reduction procedure in terms of the $\Lambda_\sigma$ for $\sigma\in\bigcup_{k=1}^n\PC(k)$.
Thus procedure is akin to that used in Section~\ref{sec:partitions}, but more complicated, involving nested evaluations of various
$\Lambda_\rho$.
In this seciton, we carry out this analysis.
Let us  also mention that
noncrossing $C[0,1]$-valued cumulants for the asymptotic $*$-moments of $X_N$ are expressed in terms of purely crossing
partitions, at least for shorter lengths.
See Proposition~4.14 of~\cite{BD}.

Let $\tau$ be the trace on $C[0,1]$ given by integration with respect to Lebesgue measure.
Consider the following formal power series in variable $g\in C[0,1]$, with each $n$-th term being an $n$-fold $\Cpx$-multilinear
map of $C[0,1]\times\cdots\times C[0,1]$ into $C[0,1]$ evaluated in the variable repeated $n$ times:
\begin{alignat*}{2}
A(g)&=\sum_{n=1}^\infty a_n(g),\qquad&a_n(g)&=\sum_{\pi\in\PC(n)}\Lambda_\pi(g,\ldots,g)g \displaybreak[1]\\
B(g)&=\sum_{n=1}^\infty b_n(g),\qquad&b_n(g)&=\sum_{\pi\in\PC^+(n)}\Lambda_\pi(g,\ldots,g)g \displaybreak[2]\\
C(g)&=\sum_{n=1}^\infty c_n(g),\qquad&c_n(g)&=\sum_{\pi\in\CO(n)}\Lambda_\pi(g,\ldots,g)g \displaybreak[1]\\
D(g)&=1+\sum_{n=1}^\infty d_n(g),\qquad&d_n(g)&=\sum_{\pi\in\Pc(n)}\Lambda_\pi(g,\ldots,g)g.
\end{alignat*}
(A more general and formal treatment of such formal power series, in terms of the multilinear function series of~\cite{D07},
can be found in Section~4 of~\cite{BD:rdiag}.)
From the remarks above (see~\cite{BD} and~\cite{RD09}) if follows that
if $m_n$ is the asymptotic moment found in~\eqref{eq:asymptmoms},
then the moment generating function of $m_n$ is, for variable $x\in\Cpx$,
\[
\sum_{n=0}^\infty m_nx^n=\tau(D(x1)),
\]
where $x1\in C[0,1]$ is the constant function $x$.

The next result is analogous to the combination of Lemmas \ref{lem:AB}, \ref{lem:BC} and \ref{lem:CD}.
\begin{prop}
We have
\begin{gather}
B(g)=g+\tau(A(g))g+A(g) \label{eq:ABg} \\
C(g)=B(g/(1-\tau(g))) \label{eq:BCg} \\
D(g)=1+C(gD(g)) \label{eq:CDg}
\end{gather}
Thus, letting $F(g)=gD(g)$ and letting $F^{\langle-1\rangle}$ be its inverse with respect to composition, we have
\begin{equation}\label{eq:FDh}
F^{\langle-1\rangle}(h)=h\big(1+C(h)\big)^{-1}.
\end{equation}
\end{prop}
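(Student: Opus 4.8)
The plan is to reprove the proposition by running, one at a time, the $C[0,1]$-valued refinements of Lemmas~\ref{lem:AB}, \ref{lem:BC} and~\ref{lem:CD}, keeping the three combinatorial bijections (Lemmas~\ref{lem:PC+}, \ref{lem:NISbijection} and~\ref{lem:PPbijection}) exactly as they stand and only upgrading the scalar weight bookkeeping to statements about the multilinear maps $\Lambda_\pi$. The essential new ingredients are three identities for $\Lambda$ under the operations appearing in those bijections; each is an instance of the reduction procedure of Section~4 of~\cite{BD}, and with them in hand each of \eqref{eq:ABg}--\eqref{eq:CDg} drops out of the corresponding scalar argument by $\Cpx$-multilinearity of $\Lambda_\pi$. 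Finally \eqref{eq:FDh} is deduced from \eqref{eq:CDg} by the same purely formal inversion as in Lemma~\ref{lem:CD}, now carried out in the ring of multilinear function series of~\cite{D07}.

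First, for \eqref{eq:ABg} I would use Lemma~\ref{lem:PC+} to write, for $n\ge2$, $\PC^+(n)=\PC(n)\cup\{\sigmat\mid\sigma\in\PC(n-1)\}$. The blocks in $\PC(n)$ contribute $a_n(g)$ directly. For the remaining terms the point is the identity
\[
\Lambda_{\sigmat}(g,\ldots,g)\,g=\tau\bigl(\Lambda_\sigma(g,\ldots,g)\,g\bigr)\,g,
\]
expressing that adjoining the last atom $n$ to the block of $\sigma$ containing $1$ closes that block up and is implemented in the $\Lambda$-calculus by an application of the trace $\tau$. Summing over $\sigma\in\PC(n-1)$ gives $\tau(a_{n-1}(g))\,g$, and using $b_1(g)=g$ together with $a_1(g)=0$ yields $B(g)=g+\tau(A(g))\,g+A(g)$. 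For \eqref{eq:BCg} I would feed the bijection $\Phi_n$ of Lemma~\ref{lem:NISbijection} into the analogous identity
\[
\Lambda_{\Phi_n(\sigma,k_1,\ldots,k_\ell)}(g,\ldots,g)\,g=\Bigl(\prod_{j=1}^{\ell}\tau(g)^{k_j-1}\Bigr)\,\Lambda_\sigma(g,\ldots,g)\,g,
\]
which says that blowing the $j$-th atom of $\sigma$ up into an interval of length $k_j$ produces, after reduction, a scalar factor $\tau(g)^{k_j-1}$ from the $k_j-1$ newly created neighbors. Since $\Lambda_\sigma$ is $\Cpx$-multilinear and each of its $\ell$ argument slots (the $\ell-1$ inputs and the trailing multiplication) carries its own geometric index $k_j$, summing $\sum_{k\ge1}\tau(g)^{k-1}g=g/(1-\tau(g))$ into every slot gives exactly $C(g)=B\bigl(g/(1-\tau(g))\bigr)$.

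For \eqref{eq:CDg} I would use the bijection $\Theta_n$ of Lemma~\ref{lem:PPbijection}, whose $\Lambda$-refinement (the analog of Lemma~\ref{lem:dTheta}) asserts that if $\Theta_n(\pi)=(\sigma,\pi_1,\ldots,\pi_\ell)$ then
\[
\Lambda_\pi(g,\ldots,g)\,g=\Lambda_\sigma\bigl(g\,u_1,\ldots,g\,u_{\ell-1}\bigr)\,g\,u_\ell,\qquad u_j=\Lambda_{\pi_j}(g,\ldots,g)\,g,
\]
with the convention $u_j=1$ when the $j$-th gap is empty; this is the statement that $\Lambda_\pi$ factors along the first block $B_1$ of $\pihat$, with each gap-partition $\pi_j$ inserted to the right of the corresponding atom. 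Substituting into $C(gD(g))$ and expanding each factor $g\,u_j$ by additivity of $\Lambda_\sigma$ in each slot, the sum over $\pi_j\in\Pc(k(j))$ and $k(j)\ge0$ collapses each slot to $gD(g)$, so $\sum_{n\ge1}d_n(g)=C(gD(g))$ and hence $D(g)=1+C(gD(g))$. With this in hand, \eqref{eq:FDh} follows formally: writing $F(g)=gD(g)$ and multiplying \eqref{eq:CDg} by $g$ gives $F(g)=g\bigl(1+C(F(g))\bigr)$, whence substituting $h=F(g)$ and inverting the invertible series $1+C(h)=1+h+\cdots$ yields $F^{\langle-1\rangle}(h)=h\bigl(1+C(h)\bigr)^{-1}$.

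The combinatorics being already settled, the real work is in the three $\Lambda$-identities, and I expect the factorization for $\Theta_n$ to be the main obstacle: unlike the scalar weight $d(\pi)=c(\sigma)\prod_j d(\pi_j)$, which factors by inspection, the corresponding statement for $\Lambda_\pi$ requires knowing that the multilinear map respects the nesting of the noncrossing cover $\pihat$ and that the gap-partitions enter as the decorated arguments $g\,u_j$ in the correct slots---this is precisely the nested reduction of Section~4 of~\cite{BD}. The trace appearing in the $\Lambda$-identity for \eqref{eq:ABg} is the other genuinely new feature, since it is exactly what distinguishes these algebra-valued generating functions from their scalar shadows, where the same step contributed merely $a_{n-1}$.
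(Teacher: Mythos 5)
Your proposal is correct and follows essentially the same route as the paper: each of \eqref{eq:ABg}--\eqref{eq:CDg} is obtained by combining the bijections of Lemmas~\ref{lem:PC+}, \ref{lem:NISbijection} and~\ref{lem:PPbijection} with exactly the three $\Lambda$-identities you state (which the paper quotes as Lemmas~4.5, 4.4, and 4.2--4.3 of~\cite{BD} rather than reproving), and \eqref{eq:FDh} then follows by the same formal inversion as in Lemma~\ref{lem:CD}. The only cosmetic difference is that your identities carry an extra right factor of $g$ and use commutativity of $C[0,1]$ to rearrange, which is equivalent to the paper's formulation.
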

\begin{proof}
For the partition $0_1\in\PC(1)$, we have $\Lambda_{0_1}()=1$, so $b_1(g)=g$.
If a partition $\pi=\sigmat\in\PC^+(n)$ for $\sigma\in\PC(n-1)$ as in Lemma~\ref{lem:PC+},
then by Lemma 4.5 of~\cite{BD} we have
\[
\Lambda_\pi(\underset{n-1\text{ times}}{\underbrace{g,\ldots,g}})
=\tau\big(\Lambda_\sigma(\underset{n-2\text{ times}}{\underbrace{g,\ldots,g}})g\big).
\]
Thus, using Lemma~\ref{lem:PC+}, for $n\ge2$ we get
\[
b_n(g)=\tau(a_{n-1}(g))g+a_n(g),
\]
which yields~\eqref{eq:ABg}.

If $\pi=\Phi_n(\sigma,k_1,\ldots,k_\ell)\in\CO(n)$ for $\sigma\in\PC^+(\ell)$ and $k_j\ge1$, $k_1+\cdots+k_\ell=n$
as in Lemma~\ref{lem:NISbijection}, then by Lemma~4.4 of~\cite{BD},
we have
\[
\Lambda_\pi(\underset{n-1\text{ times}}{\underbrace{g,\ldots,g}})
=\Lambda_\sigma(\underset{\ell-1\text{ times}}{\underbrace{g,\ldots,g}})\tau(g)^{n-\ell}.
\]
Thus, using Lemma~\ref{lem:NISbijection},
\[
c_n(g)=\sum_{\ell=1}^nb_\ell(g)\sum_{\substack{k_1,\ldots,k_\ell\ge1\\k_1+\cdots+k_\ell=n}}\prod_{j=1}^\ell\tau(g)^{k_j-1}
\]
and we get
\[
C(g)=\sum_{\ell=1}^\infty b_\ell(g)\bigg(\sum_{k=1}^\infty\tau(g)^{k-1}\bigg)^\ell
=\sum_{\ell=1}^\infty b_\ell(g)/(1-\tau(g))^\ell.
\]
By multilinearity, we have $b_\ell(gx)=b_\ell(g)x^\ell$ for all scalars $x$.
Thus, we get~\eqref{eq:BCg}.

Keep in mind we use the convention $\PC(0)=\{\emptyset\}$.
Suppose $n\ge1$, $\pi\in\Pc(n)$ and $\pi=\Theta_n(\sigma,\pi_1,\ldots,\pi_\ell)$ for $\sigma\in\CO(\ell)$ and $\pi_j\in\Pc(k(j))$
where $k(j)\ge0$ and $k(1)+\cdots+k(\ell)=n-\ell$, as in Lemma~\ref{lem:PPbijection}.
Using Lemmas~4.2 and~4.3 of~\cite{BD}, we get
\[
\Lambda_\pi(\underset{n-1\text{ times}}{\underbrace{g,\ldots,g}})
=\Lambda_\sigma(e_1g,\ldots,e_{\ell-1}g)e_\ell,
\]
where
\[
e_j=\begin{cases}
g\Lambda_{\pi_j}(\underset{k(j)-1\text{ times}}{\underbrace{g,\ldots,g}}),&k(j)>0 \\
1,&k(j)=0.
\end{cases}
\]
Thus, using the convention $d_0(g)=1$, using Lemma~\ref{lem:PPbijection} and using multilinearity of $\Lambda_\sigma$, we have
\begin{align*}
D(g)&=1+\sum_{\ell=1}^\infty\;\sum_{\sigma\in\CO(\ell)}\;\sum_{k(1),\ldots,k(\ell)\ge0}
\Lambda_\sigma\big(gd_{k(1)}(g),\ldots,gd_{k(\ell-1)}(g)\big)gd_{k(\ell)}(g) \\
&=1+\sum_{\ell=1}^\infty\sum_{\sigma\in\CO(\ell)}\Lambda_\sigma(gD(g),\ldots,gD(g))gD(g)=1+C(gD(g)).
\end{align*}
This proves~\eqref{eq:CDg}.
The final equality~\eqref{eq:FDh} follows as in the proof of Lemma~\ref{lem:CD}.

\end{proof}

\end{document}